\documentclass[11pt]{article}

\usepackage{amsmath,amssymb,amsthm}
\usepackage{graphicx}
\usepackage{tikz}
\usetikzlibrary{arrows.meta,calc,positioning,shapes.geometric}
\usepackage{hyperref}
\hypersetup{
  colorlinks=false,
  linkbordercolor={1 0 0},
  citebordercolor={0 1 0},
  urlbordercolor={0 0 1},
  pdfborder={0 0 1},
  linktoc=all,
  bookmarksnumbered=true,
  bookmarksopen=true,
  pdfstartview=FitH,
  pdftitle={Counterexamples to Two Conjectures on Modular Edge Colorings of Graphs},
  pdfauthor={}
}

\allowdisplaybreaks

\newtheorem{theorem}{Theorem}[section]
\newtheorem{lemma}[theorem]{Lemma}
\newtheorem{corollary}[theorem]{Corollary}
\newtheorem{problem}[theorem]{Problem}
\newtheorem{conjecture}[theorem]{Conjecture}

\newcommand{\chik}{\chi_k'}

\title{Counterexamples to two conjectures on modular edge colorings of graphs}
\author{Chunqiang Guo
\thanks{The work was supported by the Excellent Graduate Student Innovation Project of Xinjiang University (XJDX2025YJS034).},
Baoyindureng Wu\footnote{Corresponding author. Email: baoywu@163.com.}
\thanks{The work was supported by the Open Project of the Key Laboratory in
Xinjiang Uygur Autonomous Region of China (2023D04026) and the National
Natural Science Foundation of China (12061073).}\\
\small College of Mathematics and System Sciences, Xinjiang University\\
\small Urumqi, Xinjiang 830046, P. R. China\\}
\date{}

\begin{document}

\maketitle

\begin{abstract}
For an integer $k\geq2$, let $\chi_k'(G)$ denote the minimum number of
colors in an edge-coloring of a graph $G$ such that every nonzero degree
in each color subgraph is congruent to $1\pmod{k}$. A graph is a
$0_k$-graph if every vertex degree is divisible by $k$. We disprove a
conjecture of Berthe et al.\ (On modular edge colorings of graphs, SIAM
J. Discrete Math. 40 (2026) 897--904), which states that
$\chi_k'(G)\leq k+o(k)$ for every $0_k$-graph $G$. We prove a lower
bound for $0_k$-graphs with degree set $\{k,2k\}$ and a specified vertex
partition. With a suitable choice of the part sizes, if the number of
edges inside one part is $o(k^2)$, then
$\chi_k'(G)\geq(4-2\sqrt2+o(1))k$. This gives connected bipartite and
connected nonbipartite counterexamples. In particular, the same examples
also disprove the earlier conjecture of Botler, Colucci, and Kohayakawa
(The mod $k$ chromatic index of graphs is $O(k)$, J. Graph Theory 102
(2023) 197--200), which states that $\chi_k'(G)\leq k+C$ for some
absolute constant $C$.
\end{abstract}

\medskip
\noindent\textbf{Keywords:}
Mod $k$ chromatic index; $0_k$-graph; Counterexamples

\medskip
\noindent\textbf{Mathematics Subject Classification:}
05C15, 05C70

\section{Introduction}\label{sec:introduction}

Throughout the paper, $k\geq2$ is an integer, and all graphs are finite
and simple. A \emph{$\chi_k'$-coloring} of a graph $G$ is an edge-coloring in
which the subgraph spanned by the edges of each color has all nonzero
degrees congruent to $1\pmod{k}$. The \emph{mod $k$ chromatic index}
$\chi_k'(G)$ is the minimum number of colors in such a coloring. A graph
$G$ is a \emph{$0_k$-graph} if $d_G(v)\equiv0\pmod{k}$ for every
$v\in V(G)$.

The case $k=2$ is the odd edge-coloring problem. In this setting, the
edges of each color span an odd subgraph, meaning that every nonisolated
vertex of the corresponding color subgraph has odd degree.
Pyber~\cite{Pyber1992} initiated the study of the odd chromatic index and
proved that every simple graph has odd chromatic index at most four. Odd
edge-colorings have since been studied for several graph classes and for
loopless multigraphs; see, for example,
\cite{LuzarPetrusevskiSkrekovski2015,AtanasovPetrusevskiSkrekovski2016,
Petrusevski2018}. Related decomposition problems for odd subgraphs and
matchings were investigated by Kano and Katona~\cite{KanoKatona2002},
and M\'atrai~\cite{Matrai2006} studied coverings by three odd subgraphs.
Thus the definition above extends the parity condition from modulus two
to arbitrary modulus.

Pyber~\cite{Pyber1992} asked whether $\chik(G)$ is bounded by a function
of $k$ alone. Scott~\cite{Scott1997} answered this question affirmatively
by showing that $\chik(G)\leq5k^2\log k$ for every graph $G$, and asked
whether a linear bound in $k$ always holds. Botler, Colucci, and
Kohayakawa~\cite{BotlerColucciKohayakawa2023} proved
$\chik(G)\leq198k-101$ and proposed the following stronger additive
form, stated as Conjecture~1 in their paper.

\begin{conjecture}[Botler, Colucci, and Kohayakawa~\cite{BotlerColucciKohayakawa2023}]
\label{conj:additive-constant}
There exists an absolute constant $C$ such that
$\chik(G)\leq k+C$ for every integer $k\geq2$ and every graph $G$.
\end{conjecture}

Their work on random graphs also showed that the mod $k$ chromatic index
is close to its natural lower bound $k$ with high probability over a
wide range of edge probabilities~\cite{BotlerColucciKohayakawaRandom2023}.
Nweit and Yang~\cite{NweitYang2024} subsequently improved the general
linear bound to $177k-93$. Berthe et al.~\cite{BertheEtAl2026} later proved
$\chik(G)\leq9k+o(k)$ for all $k$ and $\chik(G)\leq7k+o(k)$ when $k$ is
odd. Their method, as in the earlier linear-bound arguments, is closely
related to divisible subgraphs and graph factors with prescribed
degrees modulo $k$; see also
\cite{AlonFriedlandKalai1984,Mader1972,Thomassen2014}. In their concluding
remarks, they proposed the following conjecture, which is Conjecture~13
in their paper.

\begin{conjecture}[Berthe et al.~\cite{BertheEtAl2026}]\label{conj:berthe}
There exists a function $f(k)=o(k)$ such that every $0_k$-graph $G$
satisfies $\chik(G)\leq k+f(k)$.
\end{conjecture}

The conjecture is natural because, in any $\chi_k'$-coloring of a
$0_k$-graph, each color appearing at a vertex contributes $1$ modulo
$k$ to its degree. Hence, at every nonisolated vertex, the number of
colors that appear is a positive multiple of $k$ and is therefore at
least $k$. The conjecture asks whether the whole graph can always be
colored using only slightly more than this local lower bound.

Very recently, Liu, Xu, and Yang~\cite{LiuXuYang2026} disproved
Conjecture~\ref{conj:additive-constant}, even for bipartite graphs.
For all sufficiently large $k$, they constructed bipartite graphs $G$
with
\[
\chi_k'(G)\geq \frac32 k-O((k\log k)^{1/3}).
\]
Their constructions are not $0_k$-graphs, and the codegree obstruction
used in their proofs does not directly apply under the requirement that
every vertex degree be divisible by $k$. Thus their result does not
settle Conjecture~\ref{conj:berthe}.

We show that Conjecture~\ref{conj:berthe} is false even under the
stronger restriction that every vertex degree belongs to $\{k,2k\}$.
Our main result is the following.

\begin{theorem}\label{thm1}
There is a sequence of integers $t_k$, with $1\leq t_k\leq k-1$,
such that the following holds. Let $\{G_k\}_{k\geq2}$ be a sequence
of graphs, where $G_k$ has a vertex partition
$V(G_k)=X_k\cup Y_k$ with $|X_k|=2k-t_k$ and $|Y_k|=2k$.
Suppose that $A_k\subseteq X_k$ has size $t_k$, every vertex of
$A_k$ has degree $2k$, every vertex of $V(G_k)\setminus A_k$ has
degree $k$, and $e(G_k[X_k])=o(k^2)$. Then
\[
\chik(G_k)\geq\left(4-2\sqrt2+o(1)\right)k.
\]
\end{theorem}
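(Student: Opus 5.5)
The plan is to extract from the degree conditions a rigid local structure for every $\chi_k'$-coloring, and then count the edges between $X_k$ and $Y_k$ color by color. Write $t=t_k$, $X=X_k$, $Y=Y_k$, $A=A_k$, and fix a $\chi_k'$-coloring of $G=G_k$ with $c$ colors.

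\emph{Step 1: local structure at each vertex.} As noted in the introduction, at each vertex every color present contributes $1 \pmod k$ to the degree. Hence the number of colors present is a positive multiple of $k$.
\begin{itemize}
\item A vertex of degree $k$ therefore sees exactly $k$ colors, each exactly once.
\item A vertex $a\in A$ has degree $2k$. Either it sees $2k$ colors each once, or it sees exactly $k$ colors. In the latter case the color degrees are congruent to $1 \pmod k$ and sum to $2k$, which forces exactly one color of degree $k+1$ (a ``star'' at $a$) and $k-1$ colors of degree $1$.
\item If some vertex of $A$ is of the first type, then already $c\ge 2k>(4-2\sqrt2)k$ and we are done.
\end{itemize}
So we may assume that every vertex of $A$ carries exactly one star. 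Consequently each color class consists of a matching on $V(G)\setminus A$ together with stars of size $k+1$ centered at some vertices of $A$. Let $s_i$ be the number of star centers in color $i$. Then $\sum_i s_i=t$, and at most $t$ colors have $s_i\ge1$.

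\emph{Step 2: edge count.} Summing degrees gives $\sum_{v\in X}d(v)=2kt+(2k-2t)k=2k^2=\sum_{v\in Y}d(v)$. Hence $e(G[Y])=e(G[X])=o(k^2)$ and $e(X,Y)=2k^2-o(k^2)$. Every vertex of $Y$ has degree $k$ and is not in $A$, so each color covers at most $|Y|=2k$ edges of $E(X,Y)$. Now consider a color with $s_i=0$. Each of its $X$–$Y$ edges has a distinct endpoint in $X$, and a vertex of $A$ touched by a non-star color has degree $1$ in it. So such a color has at most $|X|=2k-t$ edges in $E(X,Y)$. Combining the two bounds,
\[
2k^2-o(k^2)\le e(X,Y)\le 2k\cdot t+(c-t)(2k-t),
\]
which gives $c\ge t+\frac{2k^2-2kt}{2k-t}-o(k)$.

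\emph{Step 3: optimization.} Put $\tau=t/k$, so that $c/k\ge f(\tau)-o(1)$ with $f(\tau)=\tau+\frac{2(1-\tau)}{2-\tau}$. Since $f'(\tau)=1-\frac{2}{(2-\tau)^2}$, the minimum of $f$ on $[0,1]$ is attained at $\tau=2-\sqrt2$, where $f=4-2\sqrt2$. Thus the bound $c\ge(4-2\sqrt2-o(1))k$ in fact holds for any $t$. The sequence $t_k$ (for instance $t_k=\lfloor(2-\sqrt2)k\rfloor$) only serves to make the hypothesis realizable, so that such graphs exist and the statement is not vacuous.

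The main obstacle I expect is Step 1's reduction for $A$-vertices, together with the careful per-color bound in Step 2. That bound must correctly use that star colors are the only ones able to exceed $|X|$ edges. It must also use that colors with a star number at most $t$, rather than at most $\sum_i s_i$ counted with multiplicity.
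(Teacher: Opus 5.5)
\textbf{Step 3 contains an error.} The function $f(\tau)=\tau+\frac{2(1-\tau)}{2-\tau}=\frac{2-\tau^2}{2-\tau}$ satisfies $f(0)=f(1)=1$ and $f''(\tau)=-4/(2-\tau)^3<0$. So $\tau=2-\sqrt2$ is the \emph{maximum} of $f$ on $[0,1]$, not the minimum.

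Your inference that $c\ge(4-2\sqrt2-o(1))k$ ``holds for any $t$'' therefore fails. Your inequality only gives $c/k\ge f(t/k)-o(1)$, which can be close to $1$. For instance, with $t=1$ it yields $c\ge(2k^2-1-2e(G[X]))/(2k-1)<k+1$.

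So the sequence $t_k$ is not there merely to make the hypothesis non-vacuous. Graphs with these parameters exist for every $1\le t\le k-1$, e.g.\ the family $\mathcal B_{k,t}$ in Section~\ref{sec:examples}. Choosing $t_k/k\to2-\sqrt2$ is exactly what produces the constant, which is why the theorem asserts the existence of a suitable $t_k$.

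The repair is immediate with the example you already gave. For $t_k=\lfloor(2-\sqrt2)k\rfloor$ one has $1\le t_k\le k-1$ for all $k\ge2$ and $t_k/k\to2-\sqrt2$. Continuity of $f$ then gives $c/k\ge f(t_k/k)-o(1)\to4-2\sqrt2$. The paper instead takes $t_k$ to be the exact integer maximizer of $t(k-t)/(2k-t)$. This is the same optimization, since $k+\frac{t(k-t)}{2k-t}=\frac{2k^2-t^2}{2k-t}$.

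\textbf{Steps 1 and 2 are correct.} Step 2 is a different bookkeeping of the double count in Lemma~\ref{lem:general-lower-bound}. You count $X$--$Y$ edges color by color:
\begin{itemize}
\item at most $2k$ for each of the at most $t$ colors that are heavy somewhere, because every $Y$-vertex has color degree at most $1$;
\item at most $|X|=2k-t$ for every other color, which is a matching.
\end{itemize}
You charge the internal edges of $X$ once, globally, through $e(X,Y)=2k^2-2e(G[X])$.

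The paper instead counts, for each color, the vertices of $X$ at which it appears. For heavy colors it transfers to the $Y$ side via the per-color identity $N_c^X+k\mu_c-N_c^Y=2(e_c^X-e_c^Y)$, paying $2e_c^X$ per heavy color. Both routes reach the identical inequality $c(2k-t)\ge2k^2-t^2-2e(G[X])$; yours is arguably more transparent.

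One small point: your description of a color class as ``a matching on $V(G)\setminus A$ plus stars'' is inaccurate. Edges inside $A$, vertices of $A$ with color degree $1$, and adjacent heavy centers can all occur. You only use the two degree facts listed above, so this does no harm.
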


\begin{corollary}\label{cor:counterexamples}
Both Conjectures~\ref{conj:additive-constant} and~\ref{conj:berthe} are
false even for connected bipartite $0_k$-graphs with degree set
$\{k,2k\}$. They are also false for connected nonbipartite $0_k$-graphs
with the same degree set.
\end{corollary}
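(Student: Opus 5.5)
We obtain the corollary by constructing, for each $k$, graphs $G_k$ that satisfy the hypotheses of Theorem~\ref{thm1} with the given $t_k$ and are $0_k$-graphs of the required type. Theorem~\ref{thm1} then gives $\chik(G_k)\geq(4-2\sqrt2+o(1))k$. Since $4-2\sqrt2>1.17$, this exceeds $k+C$ for any fixed $C$ and exceeds $k+f(k)$ for any $f(k)=o(k)$ once $k$ is large. So a single family refutes both conjectures, and we only need the bipartite and nonbipartite constructions.

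\textbf{Bipartite case.} Fix $t=t_k$ from Theorem~\ref{thm1}, with $1\le t\le k-1$. Let $X=A\cup B$ with $|A|=t$ and $|B|=2k-2t\ge2$, and let $|Y|=2k$. All edges go between $X$ and $Y$, so $e(G[X])=0=o(k^2)$.

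1. Join every vertex of $A$ to every vertex of $Y$. Each vertex of $A$ then has degree $|Y|=2k$, and each vertex of $Y$ receives $t$ edges from $A$.
2. Add a biregular bipartite graph between $B$ and $Y$ in which every vertex of $B$ has degree $k$ and every vertex of $Y$ has degree $k-t$. The degree sums agree, since $|B|k=2k(k-t)=|Y|(k-t)$. Both prescribed degrees are at most the opposite part's size, because $k\le 2k$ and $k-t\le 2k-2t$.
3. Such a biregular graph exists by the Gale--Ryser theorem, where biregular sequences trivially satisfy the condition. Alternatively, take a circulant: write $Y=Y_1\cup Y_2$ with $|Y_i|=k$, let $B$ be indexed by $\mathbb Z_{2(k-t)}$, and use cyclic intervals.
4. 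The resulting degrees are $2k$ on $A$ and $k$ elsewhere, so $G_k$ is a $0_k$-graph with degree set $\{k,2k\}$.
5. $G_k$ is connected: $A\neq\emptyset$ is complete to $Y$, and every vertex of $B$ has a neighbor in $Y$.

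\textbf{Nonbipartite case.} Apply one degree-preserving switch to the bipartite graph.

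1. Pick distinct $b_1,b_2\in B$, which is possible because $|B|\ge2$. Pick distinct $y_1\in N(b_1)$ and $y_2\in N(b_2)$, which is possible because each $b_i$ has $k\ge2$ neighbors in $Y$.
2. Delete $b_1y_1$ and $b_2y_2$, and add $b_1b_2$ and $y_1y_2$. The added pairs were non-edges because the graph was bipartite, so the result is still simple.
3. All degrees are unchanged, and $e(G[X])=1=o(k^2)$, so the hypotheses of Theorem~\ref{thm1} still hold.
4. Connectivity survives: $A$ is still complete to $Y$, and $b_1,b_2$ each keep $k-1\ge1$ neighbors in $Y$.
5. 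The graph is nonbipartite: for any $a\in A$, the vertices $a,y_1,y_2$ form a triangle.

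\textbf{Main obstacle.} The only point needing care is the existence of the $(k,k-t)$-biregular bipartite graph between $B$ and $Y$ for every admissible $t$. This is standard via Gale--Ryser, or via the explicit circulant in step 3 of the bipartite case.
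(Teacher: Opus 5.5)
Your proof is correct and follows the paper's route: your bipartite graphs are exactly the paper's class $\mathcal B_{k,t}$, and the paper's block example ($B_i$ complete to $Y_i$) is one instance of your $(k,k-t)$-biregular graph. In both proofs Theorem~\ref{thm1} is then applied with $e(G[X])\in\{0,1\}$. The only variation is that your switch uses two vertices $b_1,b_2\in B$ rather than the paper's pair $a\in A$, $b\in B$; this keeps $A$ complete to $Y$, makes connectivity immediate, and yields the triangle $a y_1 y_2$, at the cost of needing $|B|=2k-2t\ge 2$, which holds since $t\le k-1$.
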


\section{Preliminaries}\label{sec:notation}

We use standard graph-theoretic notation. For a graph $G$, let $d_G(v)$
denote the degree of $v$, and let $G[S]$ be the subgraph induced by
$S\subseteq V(G)$. For two disjoint vertex sets $S$ and $T$, we say
that $S$ is \emph{complete to} $T$ if every vertex of $S$ is adjacent
to every vertex of $T$. A vertex of degree $j$ is called a $j$-vertex,
and $[q]=\{1,2,\ldots,q\}$.

Let $\varphi:E(G)\to[q]$ be a $\chi_k'$-coloring of $G$. For
$c\in[q]$, let $E_c=\{e\in E(G):\varphi(e)=c\}$ and
$G_c=(V(G),E_c)$. Thus $G_c$ is the spanning subgraph formed by the
edges of color $c$. We call $d_{G_c}(v)$ the \emph{color degree} of
$c$ at $v$, and say that $c$ \emph{appears at} $v$ if
$d_{G_c}(v)>0$. Every positive color degree is congruent to
$1\pmod{k}$.

\begin{lemma}\label{lem:k-colors-at-vertex}
Let $G$ be a $0_k$-graph with no isolated vertices, and suppose that
$G$ has a $\chi_k'$-coloring with fewer than $2k$ colors. Then exactly
$k$ colors appear at every vertex. Moreover, at a $k$-vertex $v$, every
color appearing at $v$ has color degree $1$. At a $2k$-vertex $v$, one
color appearing at $v$ has color degree $k+1$, and each of the other
$k-1$ colors appearing at $v$ has color degree $1$.
\end{lemma}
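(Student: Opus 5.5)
The plan is to argue locally at a single vertex, using only the congruence condition on color degrees and the assumption that fewer than $2k$ colors are used in total.

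Fix a vertex $v$ and write $d_G(v)=mk$ with $m\geq1$; here $m\geq 1$ because $G$ is a $0_k$-graph without isolated vertices. Let $C_v$ be the set of colors appearing at $v$, and let $r=|C_v|$. For each $c\in C_v$ the color degree is positive and congruent to $1\pmod{k}$, so we may write $d_{G_c}(v)=1+ka_c$ with an integer $a_c\geq0$. Since the color classes partition the edges at $v$,
\[
mk=d_G(v)=\sum_{c\in C_v}d_{G_c}(v)=r+k\sum_{c\in C_v}a_c .
\]

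Reducing this identity modulo $k$ gives $r\equiv0\pmod{k}$. Because $v$ is not isolated, $r\geq1$, and so $r\geq k$. On the other hand, the whole coloring uses fewer than $2k$ colors, so $r<2k$. The only multiple of $k$ in $[k,2k)$ is $k$ itself, hence $r=k$. This proves the first assertion at every vertex.

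Substituting $r=k$ into the identity and dividing by $k$ yields
\[
\sum_{c\in C_v}a_c=m-1 .
\]
If $v$ is a $k$-vertex, then $m=1$, so every $a_c=0$ and every color appearing at $v$ has color degree $1$. If $v$ is a $2k$-vertex, then $m=2$, so the nonnegative integers $a_c$ sum to $1$. Thus exactly one $a_c$ equals $1$, giving one color of degree $k+1$, and the remaining $k-1$ colors each have degree $1$.

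There is no real obstacle here; the only step needing care is the parity-type observation $r\equiv d_G(v)\equiv0\pmod{k}$, which pins $r$ to exactly $k$ once the global bound of fewer than $2k$ colors is used.
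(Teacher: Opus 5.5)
Your proof is correct and follows essentially the same argument as the paper. You use the congruence to force $r(v)\equiv 0 \pmod{k}$ and hence $r(v)=k$, then analyze how the degree splits among the $k$ colors. Writing each color degree as $1+ka_c$ just makes the paper's final counting step ("each color degree is $1$ or $k+1$, and the leftover total is $k$") slightly more explicit.
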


\begin{proof}
For a vertex $v$, let $r(v)$ be the number of colors that appear at
$v$. Each positive color degree is congruent to $1$ modulo $k$, while
$d_G(v)\equiv0\pmod{k}$. Hence $r(v)\equiv0\pmod{k}$. Since $v$ is not
isolated and fewer than $2k$ colors are used, $1\leq r(v)<2k$, so
$r(v)=k$.

If $d_G(v)=k$, the $k$ positive color degrees sum to $k$, so all are
$1$. If $d_G(v)=2k$, each positive color degree is congruent to $1$
modulo $k$ and is at most $2k$, so it is either $1$ or $k+1$. Starting
with $k$ contributions of $1$ leaves a total of $k$, so exactly one
color has color degree $k+1$ and all the others have color degree $1$.
\end{proof}

Whenever a $\chi_k'$-coloring with fewer than $2k$ colors is fixed, the
unique color having color degree $k+1$ at a $2k$-vertex is called the
\emph{heavy color} at that vertex.

\section{A lower bound}\label{sec:lower-bound}

Let $1\leq t\leq k-1$. We consider graphs with a vertex partition
$X\cup Y$ of sizes $2k-t$ and $2k$. The sets $X$ and $Y$ need not be
independent.

\begin{lemma}\label{lem:general-lower-bound}
Assume that $G$ has a vertex partition $V(G)=X\cup Y$ with $|X|=2k-t$ and
$|Y|=2k$. Suppose that $A\subseteq X$ has size $t$, every vertex of
$A$ has degree $2k$, and every vertex of $V(G)\setminus A$ has degree
$k$. Set $p_X=e(G[X])$. Then
\begin{equation}\label{eq:general-bound}
\chik(G)\geq
k+\max\left\{0,
\left\lceil\frac{t(k-t)-2p_X}{2k-t}\right\rceil
\right\}.
\end{equation}
\end{lemma}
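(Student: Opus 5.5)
We argue by contradiction on the number of colors, with Lemma~\ref{lem:k-colors-at-vertex} supplying the local structure, and then double count color appearances on the two sides $X$ and $Y$. First note that the right-hand side of \eqref{eq:general-bound} is at most $2k$, because $t(k-t)/(2k-t)<k$. So if $\chik(G)\geq 2k$ there is nothing to prove. Assume instead that $\varphi$ is a $\chi_k'$-coloring with $q<2k$ colors. Since $G$ has no isolated vertices (all degrees are $k$ or $2k$), Lemma~\ref{lem:k-colors-at-vertex} applies: every vertex sees exactly $k$ colors, every color degree at a $k$-vertex is $1$, and every $a\in A$ has a unique heavy color of degree $k+1$.

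For each color $c$, define the following quantities:
\begin{itemize}
\item $x_c$ and $y_c$, the numbers of vertices of $X$ and of $Y$ at which $c$ appears;
\item $h_c$, the number of vertices of $A$ at which $c$ is heavy;
\item $p_c$, the number of edges of color $c$ inside $G[X]$.
\end{itemize}
Counting appearances vertex by vertex gives
\[
\sum_c x_c=(2k-t)k,\qquad \sum_c y_c=2k\cdot k,\qquad \sum_c h_c=t,\qquad \sum_c p_c=p_X .
\]

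Next we compare the two sides of a single color class. The degree sum of $G_c$ over $X$ is $x_c+kh_c$, since a heavy appearance contributes $k+1$ rather than $1$. Hence $G_c$ has exactly $x_c+kh_c-2p_c$ edges between $X$ and $Y$. Every vertex of $Y$ has degree $k$, so its color degrees are all $1$. Therefore each $Y$-vertex receives at most one edge of color $c$, which gives $x_c+kh_c-2p_c\le y_c$. Combining this with the trivial bound $y_c\le|Y|=2k$ yields the key per-color inequality
\[
x_c\le 2k-kh_c+2p_c .
\]

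Now let $H$ be the set of colors that are heavy at some vertex of $A$, and let $m=|H|$. Since each vertex of $A$ has exactly one heavy color, $m\le t$. We bound $x_c$ in two ways according to whether $c$ is heavy somewhere.
\begin{itemize}
\item For $c\in H$, summing the key inequality and using $\sum_{c\in H}h_c=t$ gives
\[
\sum_{c\in H}x_c\le 2km-kt+2p_X .
\]
\item For $c\notin H$, we use only $x_c\le|X|=2k-t$.
\end{itemize}
Adding the two estimates and comparing with $\sum_c x_c=(2k-t)k$ gives
\[
(2k-t)k\le 2km-kt+2p_X+(q-m)(2k-t).
\]
Rearranging this produces
\[
q\ \ge\ k+\frac{t(k-m)-2p_X}{2k-t}\ \ge\ k+\frac{t(k-t)-2p_X}{2k-t},
\]
where the second step uses $m\le t$. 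Finally, $q$ is an integer and $q\ge k$ (again by Lemma~\ref{lem:k-colors-at-vertex}), so $q$ is at least the stated maximum with the ceiling.

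The step needing the most care is the per-color inequality. It rests on two facts: a $k$-vertex in $Y$ can receive at most one edge of each color, and edges of color $c$ inside $X$ are counted twice in the $X$-side degree sum, which is why $2p_c$ appears. The rest of the argument is bookkeeping. The only real choice is to split the colors into heavy and non-heavy ones and to use a different cap on $x_c$ for each group.
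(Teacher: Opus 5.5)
Your proof is correct and follows the paper's argument: the same per-color degree-sum comparison gives $x_c\le 2k-kh_c+2p_c$, the colors are split into heavy and non-heavy ones, and the appearances in $X$ are double counted in the same way. The differences are cosmetic: you bound the color-$c$ edges between $X$ and $Y$ directly by $y_c$ rather than writing the identity with $e_c^Y$, you use the weaker preliminary bound $2k$ instead of $k+t$, and you keep $m$ until the final step before applying $m\le t$.
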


\begin{proof}
Since $p_X\geq0$ and $t(k-t)/(2k-t)<t$, the term added to $k$ in
\eqref{eq:general-bound} is at most $t\leq k-1$. Thus there is nothing
to prove if $\chik(G)\geq2k$.

Suppose that $\chik(G)<2k$. Let $q=\chik(G)$, fix a
$\chi_k'$-coloring with color set $[q]$, and let $q=k+s$. The degree
assumptions make $G$ a $0_k$-graph with no isolated vertices, so
Lemma~\ref{lem:k-colors-at-vertex} shows that exactly $k$ colors appear
at every vertex and $s\geq0$.

For $c\in[q]$, let $N_c^X$ and $N_c^Y$ be the numbers of vertices in
$X$ and $Y$, respectively, at which $c$ appears. Let $\mu_c$ be the
number of vertices in $A$ whose heavy color is $c$, and put
$e_c^X=e(G_c[X])$ and $e_c^Y=e(G_c[Y])$. Comparing the two degree sums
for color $c$ gives
\[
N_c^X+k\mu_c-N_c^Y=2(e_c^X-e_c^Y).
\]
Indeed, each appearance in $X\setminus A$ or $Y$ contributes $1$, while
a heavy color at a vertex of $A$ contributes an additional $k$. Edges
between $X$ and $Y$ contribute once to each degree sum and cancel,
whereas edges inside $X$ and $Y$ contribute twice.

Let $\mathcal C_h=\{c\in[q]:\mu_c>0\}$ and
$q_h=|\mathcal C_h|$. Since every vertex of $A$ has exactly one heavy
color, $\sum_{c\in\mathcal C_h}\mu_c=t$ and $q_h\leq t$. If
$c\notin\mathcal C_h$, then $N_c^X\leq2k-t$. If
$c\in\mathcal C_h$, the preceding identity, together with
$N_c^Y\leq2k$ and $e_c^Y\geq0$, gives
$N_c^X\leq2k-k\mu_c+2e_c^X$.

Counting the pairs $(v,c)$ with $v\in X$ at which $c$ appears gives
\begin{align*}
k(2k-t)=\sum_{c=1}^{q}N_c^X
&\leq (q-q_h)(2k-t)
 +\sum_{c\in\mathcal C_h}(2k-k\mu_c+2e_c^X)\\
&=q(2k-t)+q_ht-kt
 +2\sum_{c\in\mathcal C_h}e_c^X\\
&\leq q(2k-t)-t(k-t)+2p_X.
\end{align*}
The last inequality uses $q_h\leq t$ and
$\sum_{c\in\mathcal C_h}e_c^X\leq p_X$. Substituting $q=k+s$ gives
$s(2k-t)\geq t(k-t)-2p_X$. Since $s$ is a nonnegative integer,
\eqref{eq:general-bound} follows.
\end{proof}

The degree sums over $X$ and $Y$ are both $2k^2$. If $m$ is the number
of edges between $X$ and $Y$, then
$2k^2=m+2e(G[X])=m+2e(G[Y])$. Hence
$e(G[X])=e(G[Y])$. Thus the same number of internal edges occurs on the
two sides of the partition.

\subsection{Choosing \texorpdfstring{$t$}{t}}
Let $g_k(t)=t(k-t)/(2k-t)$ for $1\leq t\leq k-1$.

\begin{lemma}\label{lem:choice-t}
For $k\geq2$, let
$\theta_k=(4k-1-\sqrt{8k^2+1})/2$ and
$t_k=\lceil\theta_k\rceil$. Then $1\leq t_k\leq k-1$ and
$g_k(t_k)=\max_{1\leq t\leq k-1}g_k(t)$. Moreover,
\[
\frac{t_k}{k}\longrightarrow2-\sqrt2
\qquad\text{and}\qquad
\frac{g_k(t_k)}{k}\longrightarrow3-2\sqrt2.
\]
\end{lemma}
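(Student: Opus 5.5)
We treat $g_k$ as a function of a real variable $t\in[1,k-1]$ (extended to $[0,k]$) and study its shape. Write $u=2k-t$, so $t=2k-u$ and $k-t=u-k$. Then
\[
g_k(t)=\frac{(2k-u)(u-k)}{u}=3k-u-\frac{2k^2}{u}.
\]
For $u>0$ this is strictly concave in $u$, hence strictly concave in $t$ on $[0,k]$. Its real maximizer is at $u=\sqrt2\,k$, that is, $t^*=(2-\sqrt2)k$. This already suggests the limits: $t^*/k=2-\sqrt2$, and $g_k(t^*)/k=3-\sqrt2-\sqrt2=3-2\sqrt2$.

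Next we locate the integer maximizer. Since $g_k$ is concave, the integer maximizer is the smallest integer $t$ with $g_k(t)\geq g_k(t+1)$; ties only enlarge the argmax set, so any choice among tied values is fine. We compute the difference $g_k(t+1)-g_k(t)$ in the $u$-variable. Moving from $t$ to $t+1$ changes $u$ to $u-1$, so
\[
g_k(t+1)-g_k(t)=1-\frac{2k^2}{u(u-1)}.
\]
This is positive iff $u(u-1)>2k^2$, i.e.\ iff $u>(1+\sqrt{8k^2+1})/2$. In terms of $t$, this reads $t<2k-(1+\sqrt{8k^2+1})/2=\theta_k$. So $g_k(t+1)>g_k(t)$ exactly when $t<\theta_k$, and $g_k(t+1)\le g_k(t)$ exactly when $t\ge\theta_k$. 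Hence $g_k$ is increasing on integers up to $\lceil\theta_k\rceil$ and nonincreasing afterwards. Therefore $t_k=\lceil\theta_k\rceil$ maximizes $g_k$ over the integers in $[1,k-1]$, provided $t_k$ lies in that range.

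Checking the range $1\le t_k\le k-1$ is the main obstacle, though it is mainly bookkeeping. For the lower bound we need $\theta_k>0$, which is equivalent to $(4k-1)^2>8k^2+1$, i.e.\ $8k^2-8k>0$; this holds for $k\ge2$. So $t_k\ge1$. For the upper bound we need $\theta_k\le k-1$, which is equivalent to $2k+1\le\sqrt{8k^2+1}$, i.e.\ $4k^2+4k\le 8k^2$; this holds for $k\ge1$. So $t_k\le k-1$. As a sanity check at $k=2$: $\theta_2=(7-\sqrt{33})/2\approx0.63$, giving $t_2=1=k-1$.

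Finally, the asymptotics. We have $\theta_k/k\to(4-2\sqrt2)/2=2-\sqrt2$, and $|t_k-\theta_k|<1$, so $t_k/k\to2-\sqrt2$. Because $g_k(t)/k=h(t/k)$ with $h(x)=x(1-x)/(2-x)$ continuous on $[0,1]$, it follows that $g_k(t_k)/k\to h(2-\sqrt2)$. Direct computation, or the $u$-form above with $u/k=\sqrt2$, gives $h(2-\sqrt2)=3-2\sqrt2$.
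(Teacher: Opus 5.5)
Your proof is correct and follows essentially the same route as the paper: with $u=2k-t$, your forward difference $1-2k^2/(u(u-1))$ is exactly the paper's $\frac{t^2+(1-4k)t+2k^2-2k}{(2k-t)(2k-t-1)}$, and your check of $0<\theta_k\le k-1$ and your limit computation are the same as the paper's. The substitution $u=2k-t$ simply makes the sign analysis cleaner (the paper's second root is discarded because $u>0$) and covers $k=2$ without a separate case.
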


\begin{proof}
For $k=2$, the only admissible value of $t$ is $1$, and
$0<\theta_2<1$, so $t_2=1$ and the assertion is immediate.
Hence assume $k\geq3$. For
$1\leq t\leq k-2$, direct calculation gives
\[
g_k(t+1)-g_k(t)=
\frac{t^2+(1-4k)t+2k^2-2k}{(2k-t)(2k-t-1)}.
\]
The denominator is positive. The roots of the numerator are $\theta_k$
and $(4k-1+\sqrt{8k^2+1})/2$, and the second root is larger than
$k-1$. Hence the difference is positive for integers $t<\theta_k$ and
negative for integers $t>\theta_k$; if $\theta_k$ is an integer, it is
zero there. This proves the assertion about $g_k(t_k)$.

Since $(4k-1)^2-(8k^2+1)=8k(k-1)>0$ and
$8k^2+1-(2k+1)^2=4k(k-1)>0$, we have $0<\theta_k<k-1$. Finally,
$\theta_k/k\to2-\sqrt2$, so $t_k/k\to2-\sqrt2$. Since
$g_k(t)/k=(t/k)(1-t/k)/(2-t/k)$, we obtain
$g_k(t_k)/k\to3-2\sqrt2$.
\end{proof}

\begin{proof}[Proof of Theorem~\ref{thm1}]
Apply Lemma~\ref{lem:general-lower-bound} with $t=t_k$ and
$p_X=e(G_k[X_k])$. By Lemma~\ref{lem:choice-t}, we have 
$g_k(t_k)=(3-2\sqrt2+o(1))k$. Also,
$2p_X/(2k-t_k)=o(k)$ because $p_X=o(k^2)$ and
$t_k/k\to2-\sqrt2$. Thus the quantity inside the ceiling in
\eqref{eq:general-bound} is positive for all sufficiently large $k$, and
\[
\chik(G_k)\geq(4-2\sqrt2+o(1))k.
\]
\end{proof}

Since $p_X\geq0$, the right-hand side of \eqref{eq:general-bound} is
largest when $p_X=0$. Lemma~\ref{lem:choice-t} then shows that no choice
of $t$ gives a larger first-order coefficient than $4-2\sqrt2$ in this
lemma. This does not rule out a larger lower bound for other degree
patterns or by a different argument.

\section{Examples}\label{sec:examples}

The examples are described for an arbitrary $t$ with
$1\leq t\leq k-1$. Taking $t=t_k$ then gives the bound in
Theorem~\ref{thm1}. We consider both bipartite and nonbipartite
examples. The first figure only illustrates one bipartite graph; its
particular arrangement is not used in the lower bound.

\subsection{Bipartite examples}

For $1\leq t\leq k-1$, let $\mathcal B_{k,t}$ be the class of bipartite
graphs $G$ with partite sets $X$ and $Y$ such that $|X|=2k-t$,
$|Y|=2k$, exactly $t$ vertices of $X$ have degree $2k$, and every other
vertex has degree $k$. Let $A\subseteq X$ be the set of $2k$-vertices
and put $B=X\setminus A$. Since $|Y|=2k$, every vertex of $A$ is
complete to $Y$.

For example, split $B$ into sets $B_1$ and $B_2$ of size $k-t$, split
$Y$ into sets $Y_1$ and $Y_2$ of size $k$, make $A$ complete to $Y$,
and make $B_i$ complete to $Y_i$ for $i\in\{1,2\}$. No other edges are
added. Figure~\ref{fig:block-family} shows this graph. Graphs in
$\mathcal B_{k,t}$ may not have this form.

\begin{figure}[htbp]
\centering
\begin{tikzpicture}[
    x=1cm,y=1cm,
    set/.style={ellipse,draw=black,line width=1pt,minimum width=3.0cm,
                minimum height=1.35cm,align=center},
    edge/.style={draw=black,line width=1pt},
    lab/.style={font=\large}
]

\node[lab] at (-6.0,1.8) {$X$};
\node[lab] at (-6.0,-1.5) {$Y$};

\node[set,label=above:$B_1$] (B1) at (-4.0,1.8)
      {$k-t$ vertices};
\node[set,label=above:$A$] (A) at (0,1.8)
      {$t$ vertices};
\node[set,label=above:$B_2$] (B2) at (4.0,1.8)
      {$k-t$ vertices};

\node[set,label=below:$Y_1$] (Y1) at (-2.5,-1.5)
      {$k$ vertices};
\node[set,label=below:$Y_2$] (Y2) at (2.5,-1.5)
      {$k$ vertices};

\draw[edge] ($(B1.south)+(0.35,0)$) -- ($(Y1.north)+(-0.75,0)$);
\draw[edge] ($(B1.south)+(0.65,0)$) -- ($(Y1.north)+(-0.45,0)$);
\draw[edge] ($(B1.south)+(0.95,0)$) -- ($(Y1.north)+(-0.15,0)$);

\draw[edge] ($(A.south)+(-0.95,0)$) -- ($(Y1.north)+(0.20,0)$);
\draw[edge] ($(A.south)+(-0.65,0)$) -- ($(Y1.north)+(0.50,0)$);
\draw[edge] ($(A.south)+(0.65,0)$) -- ($(Y2.north)+(-0.50,0)$);
\draw[edge] ($(A.south)+(0.95,0)$) -- ($(Y2.north)+(-0.20,0)$);

\draw[edge] ($(B2.south)+(-0.95,0)$) -- ($(Y2.north)+(0.15,0)$);
\draw[edge] ($(B2.south)+(-0.65,0)$) -- ($(Y2.north)+(0.45,0)$);
\draw[edge] ($(B2.south)+(-0.35,0)$) -- ($(Y2.north)+(0.75,0)$);

\end{tikzpicture}
\caption{A simple member of $\mathcal B_{k,t}$. Each group of lines
represents all edges between the corresponding sets.}
\label{fig:block-family}
\end{figure}

In the graph described above, every vertex of $A$ has degree $2k$,
every vertex of $B_i$ has degree $k$, and every vertex of $Y_i$ has
degree $t+(k-t)=k$. Hence $\mathcal B_{k,t}$ is nonempty. Moreover,
every graph in $\mathcal B_{k,t}$ is connected. Indeed, choose
$a\in A$. Since $N_G(a)=Y$, all vertices of $Y$ lie in one component.
Every vertex of $B$ has positive degree and hence has a neighbor in
$Y$. Thus every graph in $\mathcal B_{k,t}$ is a connected bipartite
$0_k$-graph with degree set $\{k,2k\}$.

For every choice $G_k\in\mathcal B_{k,t_k}$, we have
$e(G_k[X_k])=0$. Hence Theorem~\ref{thm1} gives
\[
\chik(G_k)\geq(4-2\sqrt2+o(1))k.
\]
Thus the bipartite graphs alone already disprove both conjectures.

\subsection{Nonbipartite examples}

The same bound also holds for connected nonbipartite graphs. Fix
$G\in\mathcal B_{k,t}$. Choose $a\in A$ and $b\in B$. Since
$d_G(b)=k\geq2$, choose distinct vertices $v,w\in N_G(b)$, and choose
$u\in Y\setminus\{v,w\}$.

The edges $au$ and $bv$ are present. Delete them and add $ab$ and $uv$.
The two new edges were not present because their ends lie in the same
parts of the bipartition. Let $F$ be the resulting graph. The edges
$aw$ and $bw$ are unchanged, so $a,b,w$ form a triangle in $F$.
Figure~\ref{fig:edge-change} shows the four changed edges.

\begin{figure}[htbp]
\centering
\begin{tikzpicture}[
    scale=1,
    every node/.style={font=\normalsize},
    vtx/.style={circle,draw=black,line width=1pt,minimum size=9mm,inner sep=0pt},
    changed/.style={draw=black,line width=1pt},
    kept/.style={draw=black,line width=1pt},
    arr/.style={->,line width=1pt}
]

\draw[dashed,rounded corners,line width=1pt] (-5.8,-2.0) rectangle (-1.0,2.3);
\draw[dashed,rounded corners,line width=1pt] (1.0,-2.0) rectangle (5.8,2.3);

\node at (-3.4,2.65) {$G$};
\node at (3.4,2.65) {$F$};

\node[vtx] (al) at (-4.5,1.0) {$a$};
\node[vtx] (bl) at (-2.3,1.0) {$b$};
\node[vtx] (ul) at (-5.0,-1.0) {$u$};
\node[vtx] (vl) at (-3.4,-1.0) {$v$};
\node[vtx] (wl) at (-1.8,-1.0) {$w$};

\draw[changed] (al) -- (ul);
\draw[changed] (bl) -- (vl);
\draw[kept] (al) -- (wl);
\draw[kept] (bl) -- (wl);

\draw[arr] (-0.7,0.1) -- (0.7,0.1);

\node[vtx] (ar) at (2.3,1.0) {$a$};
\node[vtx] (br) at (4.5,1.0) {$b$};
\node[vtx] (ur) at (1.8,-1.0) {$u$};
\node[vtx] (vr) at (3.4,-1.0) {$v$};
\node[vtx] (wr) at (5.0,-1.0) {$w$};

\draw[changed] (ar) -- (br);
\draw[changed] (ur) -- (vr);
\draw[kept] (ar) -- (wr);
\draw[kept] (br) -- (wr);

\end{tikzpicture}
\caption{The graph $F$ is obtained from $G$ by deleting $au$ and $bv$
and adding $ab$ and $uv$. The edges $aw$ and $bw$ are unchanged,
so $a,b,w$ form a triangle in $F$. Other edges are omitted.}
\label{fig:edge-change}
\end{figure}

At each of $a$, $b$, $u$, and $v$, one incident edge is deleted and
one is added, so all vertex degrees remain unchanged. The new edges
$ab$ and $uv$ were absent from $G$, and hence $F$ is simple. Moreover,
$e(F[X])=e(F[Y])=1$.

Every vertex of $Y\setminus\{u\}$ is still adjacent to $a$. The vertex
$u$ is joined to $v$ by $uv$, and $v$ is adjacent to $a$. The vertex
$b$ is adjacent to $a$ through $ab$. Every vertex of
$B\setminus\{b\}$ has $k\geq2$ neighbors in $Y$, so it has a neighbor
different from $u$, while every vertex of $A\setminus\{a\}$ remains
adjacent to all of $Y$. Hence $F$ is connected. Since $a,b,w$ form a
triangle, $F$ is nonbipartite. Thus $F$ is a connected nonbipartite
$0_k$-graph with degree set $\{k,2k\}$.

For each $k$, choose $G_k\in\mathcal B_{k,t_k}$ and let $F_k$ be
obtained from $G_k$ as above. Then
$e(F_k[X_k])=1=o(k^2)$, and Theorem~\ref{thm1} gives
\[
\chik(F_k)\geq(4-2\sqrt2+o(1))k.
\]

\begin{proof}[Proof of Corollary~\ref{cor:counterexamples}]
The bipartite examples above are connected. They are $0_k$-graphs with
degree set $\{k,2k\}$ and satisfy
\[
\chik(G_k)-k\geq(3-2\sqrt2+o(1))k.
\]
Since $3-2\sqrt2>0$, this difference is neither bounded by an absolute
constant nor $o(k)$, so both conjectures are false. The graphs $F_k$
give the same conclusion in the nonbipartite case.
\end{proof}

\section{Further questions}\label{sec:consequences}

The coefficient $4-2\sqrt2$ is the largest one obtained from
Lemma~\ref{lem:general-lower-bound}. It is not clear whether it is
best possible for $0_k$-graphs with degree set $\{k,2k\}$.

\begin{problem}\label{prob:degree-set}
Can the coefficient $4-2\sqrt2$ be improved for $0_k$-graphs with
degree set $\{k,2k\}$?
\end{problem}

We do not know whether the bound in
Lemma~\ref{lem:general-lower-bound} itself is sharp.

\begin{problem}\label{prob:lemma-sharp}
Is the bound in Lemma~\ref{lem:general-lower-bound} sharp?
\end{problem}

\vspace{2mm}\noindent\textbf{Declarations}

\vspace{2mm}\noindent\textbf{Competing interests.}
The authors declare that they have no competing interests.

\vspace{2mm}\noindent\textbf{Data availability statement.}
This manuscript has no associated data.

\phantomsection
\label{sec:references}


\begin{thebibliography}{99}

\bibitem{AlonFriedlandKalai1984}
N. Alon, S. Friedland, G. Kalai,
Regular subgraphs of almost regular graphs,
J. Combin. Theory Ser. B 37 (1984) 79--91.

\bibitem{AtanasovPetrusevskiSkrekovski2016}
R. Atanasov, M. Petru\v{s}evski, R. \v{S}krekovski,
Odd edge-colorability of subcubic graphs,
Ars Math. Contemp. 10 (2016) 359--370.

\bibitem{BertheEtAl2026}
G. Berthe, M. Bonamy, F. Botler, G. Carenini, L. Colucci, A. Dumas,
F. Ghasemi, P. M. Viana Neto,
On modular edge colorings of graphs,
SIAM J. Discrete Math. 40 (2026) 897--904.

\bibitem{BotlerColucciKohayakawa2023}
F. Botler, L. Colucci, Y. Kohayakawa,
The mod $k$ chromatic index of graphs is $O(k)$,
J. Graph Theory 102 (2023) 197--200.

\bibitem{BotlerColucciKohayakawaRandom2023}
F. Botler, L. Colucci, Y. Kohayakawa,
The mod $k$ chromatic index of random graphs,
J. Graph Theory 103 (2023) 767--779.

\bibitem{KanoKatona2002}
M. Kano, G. Y. Katona,
Odd subgraphs and matchings,
Discrete Math. 250 (2002) 265--272.

\bibitem{LiuXuYang2026}
X.-C. Liu, B. Xu, X. Yang,
Linear lower bounds for the modular chromatic index,
arXiv:2608.02239 (2026).

\bibitem{LuzarPetrusevskiSkrekovski2015}
B. Lu\v{z}ar, M. Petru\v{s}evski, R. \v{S}krekovski,
Odd edge coloring of graphs,
Ars Math. Contemp. 9 (2015) 277--287.

\bibitem{Mader1972}
W. Mader,
Existenz $n$-fach zusammenh\"an\-gender Teil\-graphen in Graphen gen\"u\-gend
grosser Kanten\-dichte,
Abh. Math. Semin. Univ. Hambg. 37 (1972) 86--97.

\bibitem{Matrai2006}
T. M\'atrai,
Covering the edges of a graph by three odd subgraphs,
J. Graph Theory 53 (2006) 75--82.

\bibitem{NweitYang2024}
O. Nweit, D. Yang,
On the mod $k$ chromatic index of graphs,
Discrete Math. Theor. Comput. Sci., 26 (2024), no. 3,
Art. 16, 6 pp.

\bibitem{Petrusevski2018}
M. Petru\v{s}evski,
Odd $4$-edge-colorability of graphs,
J. Graph Theory 87 (2018) 460--474.

\bibitem{Pyber1992}
L. Pyber,
Covering the edges of a graph by $\ldots$,
in Sets, Graphs and Numbers (Budapest, 1991), Colloq. Math. Soc. J\'anos
Bolyai 60, North-Holland, Amsterdam (1992) 583--610.

\bibitem{Scott1997}
A. D. Scott,
On graph decompositions modulo $k$,
Discrete Math. 175 (1997) 289--291.

\bibitem{Thomassen2014}
C. Thomassen,
Graph factors modulo $k$,
J. Combin. Theory Ser. B 106 (2014) 174--177.

\end{thebibliography}
\end{document}